\def\co{\colon\thinspace}
\newcommand{\Int}{\mbox{\rm Int}}
\newcommand{\e}{\mathrm{e}}
\newcommand{\U}{\mathrm{U}}
\newcommand{\SO}{\mathrm{SO}}
\newcommand{\N}{\mathbb{N}}
\newcommand{\R}{\mathbb{R}}
\newcommand{\Z}{\mathbb{Z}}
\newcommand{\oz}{\overline{z}}
\newcommand{\oW}{\overline{W}}
\newcommand{\ttt}{{\tt t}}
\newcommand{\tb}{{\tt tb}}
\newcommand{\rot}{{\tt rot}}
\newcommand{\cpk}{{\mathbb {CP}}^2}
\newcommand{\cpl}{{\mathbb {CP}}^1}
\newcommand{\rpk}{{\mathbb {RP}}^3}
\newtheorem{thm}{Theorem}
\newtheorem{lem}[thm]{Lemma}
\newtheorem{prop}[thm]{Proposition}
\newtheorem{cor}[thm]{Corollary}
\theoremstyle{definition}
\newtheorem*{rem}{Remark}
\newtheorem*{rems}{Remarks}
\newtheorem*{ex}{Example}
\newtheorem*{ack}{Acknowledgements}
\begin{document}

\title[Contact structures on products and fibre sums]{Contact structures
on product $5$-manifolds\\and fibre sums along circles}

\author{Hansj\"org Geiges}
\address{Mathematisches Institut, Universit\"at zu K\"oln,
Weyertal 86--90, 50931 K\"oln, Germany}
\email{geiges@math.uni-koeln.de}
\author{Andr\'as I. Stipsicz}
\address{A. R\'enyi Institute of Mathematics, Hungarian Academy of Sciences,
Re\'al\-ta\-noda utca 13--15, 1053 Budapest, Hungary}
\email{stipsicz@renyi.hu}
\date{}

\begin{abstract}
Two constructions of contact manifolds are presented: (i) pro\-ducts
of $S^1$ with manifolds admitting a suitable decomposition into two
exact symplectic pieces and (ii) fibre connected sums
along isotropic circles. Baykur has found a decomposition as required for
(i) for all closed, oriented $4$-manifolds.
As a corollary, we can show that all closed, oriented $5$-manifolds
that are Cartesian products of lower-dimensional manifolds carry
a contact structure. For symplectic $4$-manifolds we exhibit
an alternative construction of such a decomposition; this gives
us control over the homotopy type of the corresponding contact structure.
In particular, we prove that $\cpk\times S^1$ admits a contact structure
in every homotopy class of almost contact structures.
The existence of contact structures is also
established for a large class of $5$-manifolds with fundamental group~$\Z_2$.
\end{abstract}

\maketitle

\section{Introduction}
Contact structures, by virtue of their maximal non-integrability,
appear to be better adapted to twisted products (i.e.\
fibre bundles) than to Cartesian products of manifolds.
Classical examples supporting this dictum are the natural contact structures
on unit cotangent bundles $ST^*B$ (which are Cartesian products only if
the base manifold $B$ is parallelisable) and the Boothby--Wang~\cite{bowa58}
construction, where the contact form is the connection $1$-form
on an $S^1$-bundle, with curvature form equal to a symplectic form
on the base representing the Euler class of the bundle.
More recently, Lerman~\cite{lerm04} has shown how to build contact
structures on bundles whose curvature satisfies a certain
non-degeneracy condition. Lerman's work is a contact version of Sternberg's
minimal coupling construction and Weinstein's fat bundles in
symplectic geometry, and it generalised earlier results of Yamazaki.

By contrast, little is known about contact structures on Cartesian
products. Some simple examples, such as products of spheres,
can be obtained by contact surgery on a sphere. Manifolds
of the form $M^3\times S^2$ admit a contact structure thanks to
the parallelisability of (closed, oriented) $3$-manifolds.
Using a branched covering construction, one can then also put
a contact structure on the product of $M^3$ with an arbitrary
(closed, oriented) surface, see~\cite{geig97}.

But it took more than twenty years from Lutz's discovery of
a contact structure on the $5$-torus to a construction by
Bourgeois~\cite{bour02} of a contact structure on higher-dimensional
tori and, more generally, products of contact manifolds with
a surface of genus at least one.

In Section~\ref{section:product} of the present note we exhibit a method
for putting contact
structures on Cartesian products of even-dimensional manifolds~$V$
with~$S^1$, provided the manifold $V$ admits a suitable
decomposition into two exact symplectic pieces (Theorem~\ref{thm:product}).
For closed, oriented $4$-manifolds, such a decomposition has been
shown to exist by Baykur~\cite{bayk06}. As a corollary,
we find that any closed, oriented $5$-manifold that is the Cartesian product
of lower-dimensional manifolds admits a contact structure
(Corollary~\ref{cor:five}).

In Section~\ref{section:decomp} we prove an analogue of
Baykur's decomposition result for the special case of
closed symplectic $4$-manifolds (Theorem~\ref{thm:decomp}).
Our proof rests on Donaldson's theorem
about codimension~$2$ symplectic submanifolds, as well as the classification
of tight contact structures on circle bundles over surfaces
due to Giroux and Honda. (By contrast, Baykur's argument is based on
the theory of Lefschetz fibrations
on $4$-manifolds and open book decompositions of $3$-manifolds
adapted to contact structures.)
Signs are important for the discussion in Section~\ref{section:decomp},
so in Section~\ref{section:BW} we have inserted a brief description
of Boothby--Wang contact forms and their convex resp.\ concave fillings.

In Section~\ref{section:examples} we discuss some simple examples,
notably $V=\cpk$. Here we need not invoke Donaldson's theorem.
Furthermore, the explicit nature of our decomposition theorem
allows us to show, for instance, that $\cpk\times S^1$ admits
a contact structure in every homotopy class of almost contact
structures (Proposition~\ref{prop:cp2}).

A further construction of contact manifolds will be presented
in Section~\ref{section:sum}: the connected sum along isotropic circles
(Theorem~\ref{thm:sum}),
which may be regarded as a counterpart to the fibre connected sum along
codimension~2 contact submanifolds described in~\cite{geig97},
cf.~\cite[Chapter~7.4]{geig08}.

Finally, in Section~\ref{section:Z2} we use the connected sum along
circles to produce contact structures on a class of $5$-manifolds
with fundamental group~$\Z_2$ described recently
by Hambleton and Su~\cite{hasu09}
(Proposition~\ref{prop:five}). This class is in some respects more
restricted than that discussed by Charles Thomas and the first
author~\cite{geth98}, but the Hambleton--Su description is more explicit,
and it includes some additional cases. In fact, it was their classification
result that prompted us to consider
the questions discussed in the present note.

We close this introduction with a comment on notational and other
conventions. We write $M,N$ for closed odd-dimensional (typically:
contact) manifolds. Our contact structures are always assumed to be
cooriented, i.e.\ defined as $\ker\alpha$ with $\alpha$ a globally
defined $1$-form, unique up to multiplication by a positive function.
A $(2n+1)$-dimensional contact manifold $(N,\ker\alpha )$ is given
the orientation induced by the volume form $\alpha\wedge (d\alpha )^n$.

The notation $V$ stands for a closed even-dimensional
(typically: symplectic) manifold; $W$ is used for compact
even-dimensional manifolds with boundary (typically: a symplectic filling
of a contact manifold).
Given a $2n$-dimensional symplectic manifold $(W,\omega )$, we equip it with
the orientation induced by the volume form~$\omega^n$. We write
$\oW$ for the manifold with the opposite orientation.

Unless stated otherwise, (co-)homology is understood with integer
coefficients.
\section{Contact structures on products with $S^1$}
\label{section:product}
A compact manifold $W$ with an exact symplectic form $\omega =d\lambda$
is called an {\em exact symplectic filling} of the contact manifold
$(M,\xi )$ if the following conditions hold:
\begin{itemize}
\item $M=\partial W$ as oriented manifolds,
\item the pull-back of $\lambda$ to $M$ under the inclusion map
is a contact form defining the contact structure~$\xi$.
\end{itemize}
These conditions imply that
the Liouville vector field $X$ on $(W,\omega )$ defined
by $i_X\omega =\lambda$ is pointing outwards along the boundary~$M$,
so any exact filling is in particular a strong filling.

\begin{rem}
Every Stein filling is an exact filling.
With a construction described in~\cite{elia91},
any (weak or strong) symplectic filling
with the property that the symplectic form is exact
can be modified in a collar neighbourhood of the boundary so as to become
an exact filling (of the same contact manifold);
see also~\cite{geig06}. For more information about the
various notions of filling see \cite[Chapter~5]{geig08} and
\cite[Chapter~12]{ozst04}.
\end{rem}

We can use the flow of $X$ to define a collar neighbourhood
$(-\varepsilon ,0]\times M$ of $M$ in~$W$. Here $\{0\}\times M$ is
identified with $M=\partial W$, and with $t$ denoting the parameter
in $(-\varepsilon ,0]$ we have $\partial_t=X$. Then, on this collar
neighbourhood, the symplectic form $\omega$ can be written as
$\omega= d(\e^t\lambda )$.

This allows us to define a symplectic completion $W\cup_M \bigl(
[0,\infty )\times M\bigr)$, where the symplectic form on
$W$ is $\omega$, and on $[0,\infty )\times M$ it is $d(\e^t\lambda )$.

Given any non-negative function $h$ on~$M$, we have an embedding
\[
\begin{array}{rclcr}
M & \longrightarrow & [0,\infty ) & \times & M\\
x & \longmapsto     & (h(x)       & ,      & x)
\end{array}
\]
under which the $1$-form $\e^t\lambda$ pulls back to $\e^h\lambda$.
Moreover, two contact forms defining the same (cooriented) contact
structure on a compact manifold $M$ can be made to coincide after multiplying
each of them with a suitable function $f_i\co M\rightarrow [1,\infty )$,
$i=1,2$. Thus, given two exact symplectic fillings $(W_i,d\lambda_i)$,
$i=1,2$, of the same contact manifold $(M,\xi )$, we may assume without loss
of generality that $\lambda_1|_{TM}=\lambda_2|_{TM}$.

\begin{thm}
\label{thm:product}
Let $(W_1,d\lambda_1)$ and $(W_2,d\lambda_2)$ be two exact symplectic
fillings of the same contact manifold $(M,\xi )$. Then the manifold
$(W_1\cup_M\oW_2)\times S^1$ admits a contact structure.
\end{thm}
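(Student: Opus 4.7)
The plan is to construct a contact form explicitly on $V\times S^1$, where $V=W_1\cup_M\oW_2$. After the normalisation $\lambda_1|_{TM}=\lambda_2|_{TM}=\alpha_0$ recalled just before the theorem, I use the Liouville flows to choose a smooth collar $(-\varepsilon,\varepsilon)\times M\subset V$ in which $\tau\in(-\varepsilon,0]$ parametrises the $W_1$-side with $\lambda_1=\e^{\tau}\alpha_0$ and $\tau\in[0,\varepsilon)$ parametrises the $\oW_2$-side with $\lambda_2=\e^{-\tau}\alpha_0$; the minus sign in the second reparametrisation absorbs the orientation reversal.

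On each half separately there is an obvious candidate contact form on the product with $S^1$. On $W_1\times S^1$ the form $\alpha_1=\lambda_1+d\theta$ satisfies $d\alpha_1=\omega_1$, and since $\lambda_1\wedge\omega_1^2=0$ for dimension reasons we get $\alpha_1\wedge(d\alpha_1)^2=d\theta\wedge\omega_1^2\neq 0$, so $\alpha_1$ is contact. An analogous calculation shows that $\alpha_2=\lambda_2-d\theta$ (note the sign) is contact on $\oW_2\times S^1$. These two forms do not restrict to the same 1-form on $M\times S^1$, so the proof reduces to a smooth interpolation between them across the collar.

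For that interpolation I would use the ansatz $\alpha=F(\tau)\alpha_0+G(\tau)\,d\theta$ on $(-\varepsilon,\varepsilon)\times M\times S^1$. Because $(d\alpha_0)^2$ vanishes on the three-manifold $M$, a direct calculation gives
\[
\alpha\wedge(d\alpha)^2 \;=\; 2F(GF'-FG')\,d\tau\wedge\alpha_0\wedge d\alpha_0\wedge d\theta,
\]
so contactness reduces to $F\neq 0$ together with the Wronskian condition $GF'-FG'\neq 0$. Writing $(F,G)=r(\cos\phi,\sin\phi)$, this becomes $r>0$ and $\phi$ strictly monotonic. A monotone decrease of $\phi$ from $\arctan(\e^{\varepsilon})$ at $\tau=-\varepsilon$ to $-\arctan(\e^{\varepsilon})$ at $\tau=\varepsilon$, passing through $\phi=0$ (where $G=0$ but $F>0$), realises such a curve; via a standard cut-off I arrange in addition that $(F,G)$ equals $(\e^{\tau},1)$ near $\tau=-\varepsilon$ and $(\e^{-\tau},-1)$ near $\tau=\varepsilon$, so the collar formula patches smoothly to $\alpha_1$ on $W_1\times S^1$ and to $\alpha_2$ on $\oW_2\times S^1$.

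The substantive design choice is the asymmetric sign in front of $d\theta$. With the symmetric ansatz $\alpha_i=\lambda_i+d\theta$ on both sides, the boundary data would force $G(-\varepsilon)=G(\varepsilon)=1$, so the Wronskian $GF'-FG'$ at the two ends would take opposite signs (since $F'(-\varepsilon)>0$ while $F'(\varepsilon)<0$); somewhere in the collar either $F$ or $GF'-FG'$ would have to vanish and contactness would fail. Reversing the sign of $d\theta$ on the $\oW_2$ side flips $G(\varepsilon)$ to $-1$ and restores a consistent sign of the Wronskian, which is precisely what unlocks the monotone polar interpolation above.
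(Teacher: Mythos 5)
Your proposal is correct and follows essentially the same route as the paper: the same forms $\lambda_1+d\theta$ and $\lambda_2-d\theta$ on the two pieces, with the same collar interpolation $f\beta+g\,d\theta$ whose contactness is governed by the Wronskian $f'g-fg'$ (the paper phrases the normalisation via an even $f$ and odd $g$ on an inserted cylinder $[-1,1]\times M$, and works in arbitrary dimension $2n+1$ where the relevant factor is $nf^{n-1}(f'g-fg')$, but these differences from your polar-coordinate curve in dimension $5$ are cosmetic). Your closing remark on why the sign of $d\theta$ must flip on the $\oW_2$ side is exactly the right explanation of the role of the orientation reversal.
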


\begin{proof}
As explained, we may assume that $\lambda_1|_{TM}=\lambda_2|_{TM}$.
Write $\beta$ for this contact form on~$M$. We then find collar neighbourhoods
$(-1-\varepsilon ,-1]\times M$ of $\{ -1\}\times M\equiv M=\partial W_i$
in $W_i$ where $\lambda_i=\e^{t+1}\beta$, $i=1,2$. (This shift in the
collar parameter by $1$ is made for notational convenience in the
construction that follows.)

Now choose two smooth functions $f$ and $g$ on the interval
$(-1-\varepsilon , 1+\varepsilon )$ subject to the following conditions
(see Figure~\ref{figure:functions}):
\begin{itemize}
\item $f$ is an even function with $f(t)=\e^{t+1}$ near $(-1-\varepsilon ,-1]$,
\item $g$ is an odd function with $g(t)=1$ near $(-1-\varepsilon ,-1]$,
\item $f'g-fg'>0$.
\end{itemize}

\begin{figure}[h]
\labellist
\small\hair 2pt
\pinlabel $t$ [t] at 246 145
\pinlabel $t$ [t] at 605 145
\pinlabel $g(t)$ [r] at 486 282
\pinlabel $f(t)$ [r] at 126 282
\pinlabel $1$ [r] at 117 217
\pinlabel $1$ [r] at 477 217
\pinlabel $-1$ [t] at 54 135
\pinlabel $1$ [t] at 198 135
\pinlabel $-1$ [t] at 414 135
\pinlabel $1$ [t] at 558 135
\endlabellist
\centering
\includegraphics[scale=0.5]{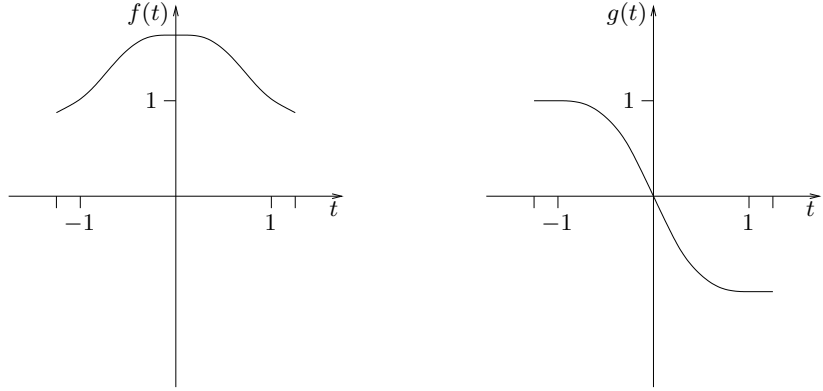}
  \caption{The functions $f$ and $g$.}
  \label{figure:functions}
\end{figure}

Now we consider the manifold
\[  N:=\bigl( W_1\cup_M ([-1,1]\times M)\cup_M\oW_2\bigr)\times S^1,\]
which is a diffeomorphic copy of the manifold in the theorem.
We write $\theta$ for the $S^1$-coordinate.
Define a smooth $1$-form $\alpha$ on $N$ by
\[ \alpha =\left\{
\begin{array}{ll}
\lambda_1+d\theta & \mbox{\rm on}\;\; \Int (W_1)\times S^1,\\[.2mm]
f\beta +g\, d\theta & \mbox{\rm on}\;\; (-1-\varepsilon ,1+\varepsilon )
           \times M\times S^1,\\[.2mm]
\lambda_2-d\theta & \mbox{\rm on}\;\; \Int (\oW_2)\times S^1.
\end{array}\right. \]
Then $\alpha\wedge (d\alpha )^n$ (where $\dim N =2n+1$) equals
\[ \left\{
\begin{array}{ll}
(d\lambda_1)^n\wedge d\theta & \mbox{\rm on}\;\; \Int (W_1)\times S^1,\\[.2mm]
nf^{n-1}(f'g-fg')\, dt\wedge \beta\wedge (d\beta )^{n-1}\wedge d\theta &
 \mbox{\rm on}\;\; (-1-\varepsilon ,1+\varepsilon ) \times M\times S^1,\\[.2mm]
-(d\lambda_2)^n\wedge d\theta & \mbox{\rm on}\;\; \Int (\oW_2)\times S^1.
\end{array}\right. \]
This shows that $\alpha$ is a contact form on~$N$.
\end{proof}

\begin{rem}
We discovered this construction by a slightly roundabout route.
Given an exact symplectic filling $(W,d\lambda)$, one can form
an open book decomposition of a manifold $N_W$ with page $W$ and
monodromy the identity. As
Giroux~\cite{giro02} has observed --- generalising a 3-dimensional
construction due to Thurston and Winkelnkemper --- this manifold $N_W$
carries an adapted contact form~$\alpha$, cf.~\cite[Theorem~7.3.3]{geig08}.
Starting from such a contact form adapted to an open book,
Bourgeois~\cite{bour02} constructed a $T^2$-invariant
contact form on the product of $N_W$
with a $2$-torus~$T^2$, cf.~\cite[Theorem~7.3.6]{geig08}. The contact
reduction (cf.~\cite[Chapter~7.7]{geig08}) with respect to the $S^1$-action
of one of the factors in $T^2=S^1\times S^1$ yields a contact structure on
$(W\cup_{\partial W}\oW)\times S^1$.
The contact structure near $\partial W\times S^1$ is independent
of the topology of the exact symplectic filling, so this construction
generalises as described in the preceding proof.
\end{rem}

\begin{ex}
If one takes both $W_i$ equal to the $2$-disc, and $\lambda_i=x\, dy-y\, dx$,
the above construction yields, up to isotopy, the standard tight contact
structure $\ker (z\, d\theta+x\, dy -y\, dx)$ on $S^2\times S^1$.
\end{ex}

As shown by Hirzebruch and Hopf~\cite{hiho58}, for any closed,
oriented $4$-manifold $V$ the third integral
Stiefel--Whitney class $W_3(V)$ vanishes. Then we also have
$W_3(V\times S^1)=0$, which implies that $V\times S^1$ admits an almost
contact structure~\cite[Proposition~8.1.1]{geig08}. The same is true for
$5$-manifolds of the form $M^3\times\Sigma^2$, since all closed, oriented
$3$-manifolds are spin.

\begin{cor}
\label{cor:five}
Any closed, oriented $5$-manifold that is the Cartesian product of
two lower-dimensional manifolds admits a contact structure.
\end{cor}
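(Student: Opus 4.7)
The plan is to enumerate the possible dimension-splittings of a closed oriented $5$-manifold as a Cartesian product of lower-dimensional factors and dispatch each case using either Theorem~\ref{thm:product} (combined with Baykur's decomposition) or results already available in the literature. A five-dimensional product of lower-dimensional manifolds must have factors whose dimensions sum to~$5$; after grouping, every such product is diffeomorphic to one of the form $V^{4}\times S^{1}$ or $M^{3}\times\Sigma^{2}$. Indeed, factorisations involving more than two factors can always be regrouped: for instance, $S^{1}\times S^{1}\times M^{3}$ fits the second pattern (taking $\Sigma^{2}=T^{2}$), while $S^{1}\times\Sigma^{2}\times\Sigma^{2}$ or $T^{k}\times \Sigma^{2}$ with $k\geq 3$ fits the first pattern after writing the four-dimensional piece as the product of the non-circle factors with the remaining circles. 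Hence it suffices to treat the two cases above.

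For the case $V^{4}\times S^{1}$ with $V$ closed and oriented, I would invoke Baykur's theorem from~\cite{bayk06}, which asserts that any such $V$ decomposes as $V=W_{1}\cup_{M}\overline{W}_{2}$, where $(W_{i},d\lambda_{i})$ are exact symplectic fillings of a common contact $3$-manifold $(M,\xi)$. Theorem~\ref{thm:product} then produces a contact structure on $(W_{1}\cup_{M}\overline{W}_{2})\times S^{1}=V\times S^{1}$. No additional homotopy obstruction needs checking, since $W_{3}(V\times S^{1})=0$ by the Hirzebruch--Hopf result recalled just before the corollary.

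For the case $M^{3}\times \Sigma^{2}$, I would cite the construction already mentioned in the introduction: the parallelisability of every closed oriented $3$-manifold produces a contact structure on $M^{3}\times S^{2}$, and a branched covering argument of the first author~\cite{geig97} promotes this to a contact structure on $M^{3}\times\Sigma^{2}$ for any closed oriented surface~$\Sigma^{2}$.

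The only substantive input is Baykur's decomposition theorem; granted that, the corollary reduces to an essentially combinatorial case-check on how the five dimensions can be partitioned. The main conceptual obstacle would have been the $V^{4}\times S^{1}$ case, and this is precisely what Theorem~\ref{thm:product} together with~\cite{bayk06} is designed to overcome.
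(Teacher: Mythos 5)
Your proposal is correct and follows essentially the same route as the paper: the case $M^3\times\Sigma^2$ is dispatched by citing \cite{geig97}, and the case $V^4\times S^1$ by combining Baykur's decomposition with Theorem~\ref{thm:product}; your explicit regrouping of multi-factor products into these two patterns is a detail the paper leaves implicit but is entirely correct. (The remark about $W_3(V\times S^1)=0$ is harmless but not needed, since the constructions produce contact structures directly.)
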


\begin{proof}
For manifolds of the form $M^3\times\Sigma^2$ this was
proved in~\cite{geig97}.
For manifolds of the form $V^4\times S^1$ we appeal to a result
of Baykur~\cite{bayk06}, according to which  any closed, oriented
$4$-manifold $V$ admits a decomposition $V=W_1\cup_M\oW_2$ as required
by Theorem~\ref{thm:product}. In fact, $W_1$ and $W_2$ may be taken to be
{\em Stein} fillings of the same contact manifold $(M,\xi )$.
\end{proof}

It was first shown by Akbulut and Matveyev~\cite{akma98}
that any closed, oriented $4$-manifold can be decomposed
into two Stein pieces, but their result did not give any information
about the induced contact structures on the separating hypersurface.
Baykur's result provides this additional information by an approach via
the theory of Lefschetz fibrations
on $4$-manifolds and open book decompositions of $3$-manifolds
adapted to contact structures. In Section~\ref{section:decomp}
we prove a similar decomposition result for the subclass of {\em symplectic}
$4$-manifolds, based on Donaldson's theorem
about codimension~$2$ symplectic submanifolds and the classification
of tight contact structures on circle bundles over surfaces
due to Giroux and Honda. The following section serves
as a preparation for Section~\ref{section:decomp}.
\section{Convex vs.\ concave fillings of Boothby--Wang contact forms}
\label{section:BW}
We briefly recall some facts about the Boothby--Wang construction;
for more details see~\cite[Chapter~7.2]{geig08}.

Let $\pi\co M\rightarrow B$ be a principal $S^1$-bundle over a closed,
oriented manifold~$B$.
By a connection $1$-form on $M$ we mean a differential $1$-form
$\alpha$ which is invariant ($\mathcal{L}_{\partial_{\theta}}\alpha\equiv 0$)
and normalised ($\alpha(\partial_{\theta})\equiv 1$). Note that the
relation with the usual $i\R$-valued connection $1$-form $A$ is
$\alpha =-iA$. The differential $d\alpha$ induces a well-defined
closed $2$-form $\omega$ on~$B$, i.e.\ $d\alpha =\pi^*\omega$. This
$\omega$ is called the curvature form of the connection $1$-form~$\alpha$.
The Euler class of the bundle is given by $e=-[\omega /2\pi ]$.

We also write $\pi\co W\rightarrow B$ for the projection map in
the associated $D^2$-bundle. With $r$ denoting the radial coordinate on~$D^2$,
we can extend $\alpha$ to an $r$-invariant $1$-form on $W$ outside
the zero section; the $1$-form $r^2\alpha$ is well defined and
smooth on all of~$W$.

We say that the $S^1$- or $D^2$-bundle of Euler class $e$ is {\em positive\/}
(resp.\ {\em negative\/}) if the base manifold $B$ admits a symplectic
form $\omega$ (compatible with the orientation of~$B$) such that
the cohomology class $[\omega /2\pi ]$ equals~$e$ (resp.~$-e$).
Clearly, in either case the connection $1$-form $\alpha$ (on the
$S^1$-bundle with Euler class~$e$)
with curvature form $\mp\omega$ --- such an $\alpha$ can always be found ---
will be a contact form on~$M$. (Notice that for $\dim B =4m$,
a $2$-form $\omega$ on $B$ is symplectic if and only if $-\omega$
is symplectic, so any positive bundle will also be negative,
and vice versa.)

The concave part of the following lemma is~\cite[Lemma~2.6]{mcdu91};
the convex part is completely analogous. This has also been observed by
Niederkr\"uger~\cite{nied05}.

\begin{lem}
\label{lem:con}
If $e$ is positive (resp.\ negative), then there is a symplectic form
$\Omega$ on $W$ which makes it a strong concave (resp.\ convex) filling
of $(M,\ker(\mp\alpha ))$.
\end{lem}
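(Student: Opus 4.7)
The plan is to define $\Omega$ explicitly on $W$ in the form
\[
\Omega = c\,\pi^*\omega + d(r^2\alpha),
\]
for a single real constant $c$ depending on the case, and to verify the required properties by direct computation. Since $r^2\alpha$ extends smoothly across the zero section $B\subset W$ (as recalled in the discussion preceding the lemma) and $\pi^*\omega$ is smooth on $W$, smoothness of $\Omega$ is automatic.

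In the convex case ($e$ negative, $d\alpha=\pi^*\omega$) I would take $c=1$. Using $\pi^*\omega=d\alpha$ on $W\setminus B$ one rewrites $\Omega=d\bigl((1+r^2)\alpha\bigr)$, so the primitive $\lambda=(1+r^2)\alpha$ restricts on $\partial W=M$ to $2\alpha$, a positive multiple of the contact form $\alpha$, and the Liouville field $X=\tfrac{1+r^2}{2r}\partial_r$ points outward at $r=1$. In the concave case ($e$ positive, $d\alpha=-\pi^*\omega$) the change of sign of $d\alpha$ forces the choice $c>1$; taking $c=2$ gives $\Omega=d\bigl((r^2-2)\alpha\bigr)$ on $W\setminus B$, with $\lambda|_M=-\alpha$ (a positive multiple of the contact form $-\alpha$) and Liouville field $X=\tfrac{r^2-2}{2r}\partial_r$ equal to $-\tfrac12\partial_r$ at $r=1$, pointing inward.

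The main step is symplecticity. Away from the zero section one expands
\[
\Omega^{k+1} = 2(k+1)(c\pm r^2)^k\, r\, (\pi^*\omega)^k\wedge dr\wedge\alpha \qquad (\dim B=2k),
\]
with a $+$ sign in the convex case and a $-$ sign in the concave case; this is nowhere zero on $(0,1]\times M$ provided $c\pm r^2$ has constant sign on $[0,1]$, which holds for the chosen values of $c$. At the zero section a local unitary trivialisation of the fibre $\C$ exhibits $r^2\alpha$ as $x\,dy-y\,dx+r^2\pi^*\eta$, so $d(r^2\alpha)|_{r=0}=2\,dx\wedge dy$ supplies the fibre direction and $\Omega|_B=c\,\omega+2\,dx\wedge dy$ is fibrewise symplectic. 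The only genuinely delicate point --- and the reason for the detailed sign discussion in Section~\ref{section:BW} --- is getting the signs right, so that the Liouville field ends up pointing the correct way at $r=1$; this is governed by the sign of $d\alpha$, which is in turn dictated by whether $e$ is positive or negative.
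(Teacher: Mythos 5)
Your proposal is correct and follows essentially the same route as the paper: the paper sets $\Omega=\frac{1}{2}d(r^2\alpha)+\pi^*\omega$ in both cases and exhibits the Liouville field $X=\bigl((r^2\mp 2)/2r\bigr)\partial_r$, which agrees with your forms up to an overall positive rescaling. Your additional verification of nondegeneracy along the zero section and of the sign of $c\pm r^2$ is detail the paper leaves implicit, but it is the same construction.
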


\begin{proof}
Depending on whether $e$ is positive or negative, we have
$\pm e=[\omega /2\pi]$ for some symplectic form $\omega$ on~$B$.
We find a connection $1$-form $\alpha$ with $d\alpha=\mp\pi^*\omega$.
In both cases we set
\[ \Omega =\frac{1}{2}d(r^2\alpha )+\pi^*\omega .\]
This is a symplectic form on $W$, compatible with its orientation,
and a Liouville vector field defined near
$\partial W$ (in fact, everywhere outside the zero section)
and pointing into (resp.\ out of) $W$ along the boundary is given
by $X=\bigl( (r^2\mp 2)/2r\bigr)\,\partial_r$.
This completes the proof.
\end{proof}
\section{Decompositions of symplectic four-manifolds}
\label{section:decomp}
The following is a special case of Baykur's result
(and with exact symplectic rather than Stein fillings),
but it will be proved by entirely different methods.
As we shall see in Section~\ref{section:examples},
the decomposition we construct of a symplectic $4$-manifold $V$ is explicit
enough, in specific cases, to allow control over the homotopy type of the
corresponding contact structure on $V\times S^1$.

\begin{thm}
\label{thm:decomp}
Suppose that $(V,\omega )$ is a closed symplectic $4$-manifold. Then there are
exact symplectic fillings $W_1, W_2$ of a contact $3$-manifold $(M, \xi)$
with the property that $W_1\cup _M \oW_2$ is diffeomorphic to $V$,
where we glue using a contactomorphism of the boundaries $\partial W_1$
and $\partial W_2$.
\end{thm}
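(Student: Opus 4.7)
The plan is to apply Donaldson's theorem to find a symplectic surface $\Sigma\subset V$, split $V$ along the boundary of a tubular neighbourhood of $\Sigma$, realise the two resulting pieces as exact symplectic fillings via a Stein structure on the complement and Lemma~\ref{lem:con} on the disk bundle, and finally match the two induced contact boundaries by means of the Giroux--Honda classification.

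By Donaldson's theorem, for all sufficiently large $k$ there is a connected symplectic surface $\Sigma\subset V$ Poincar\'e dual to $k[\omega/2\pi]$, with positive self-intersection $n:=[\Sigma]^2=k^2([\omega]/2\pi)^2[V]$. For $k$ large enough, the complement $W_1:=V\setminus\nu(\Sigma)$ admits a Stein structure: it deformation retracts onto an isotropic $2$-complex supporting a strictly plurisubharmonic Morse function, to which Eliashberg's characterisation of Stein domains applies. Thus $W_1$ is an exact symplectic filling of some contact structure $\xi_1$ on $M:=\partial W_1$, where $M$ is the circle bundle over $\Sigma$ associated with the normal bundle of $\Sigma$ in~$V$.

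For the other piece, let $W_2$ denote the abstract oriented $D^2$-bundle over $\Sigma$ with Euler number $-n$. Its Euler class is negative in the sense of Section~\ref{section:BW}, so Lemma~\ref{lem:con} provides an exact symplectic form on $W_2$ making it a convex filling of the Boothby--Wang contact structure $\xi_2$ on $\partial W_2$. As oriented smooth $4$-manifolds one has $\oW_2\cong\nu(\Sigma)$; combining this with the tautological decomposition $V=(V\setminus\nu(\Sigma))\cup\nu(\Sigma)$, one obtains a diffeomorphism $W_1\cup_M\oW_2\cong V$ for a suitable bundle identification $\partial\oW_2\cong\partial\nu(\Sigma)$.

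The final step is to arrange that this gluing identification is realised by a contactomorphism between $(M,\xi_1)$ and $(M,\xi_2)$. Here the Giroux--Honda classification of tight contact structures on circle bundles over closed orientable surfaces takes over: both $\xi_1$ and $\xi_2$ are tight (each admits a Stein filling), so each corresponds to a specific entry in the Giroux--Honda list. I expect this matching to be the principal obstacle, since the Stein structure on the Donaldson complement is far from canonical, so one must extract just enough information about $\xi_1$---comparing, for instance, Euler classes of the $2$-plane fields, or the almost contact homotopy class induced on $V$, in the spirit of the examples in Section~\ref{section:examples}---to force it into the same component of the classification as the Boothby--Wang structure $\xi_2$, and hence to realise the gluing by a contactomorphism.
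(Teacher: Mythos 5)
Your overall plan (Donaldson hypersurface, split along the boundary of its tubular neighbourhood, fill both sides, match contact structures) is the paper's plan, but the step where you produce $W_2$ contains a genuine error. Lemma~\ref{lem:con} does \emph{not} give an exact symplectic form on the disc bundle: the form constructed there is $\Omega=\tfrac12 d(r^2\alpha)+\pi^*\omega$, whose restriction to the zero section is the symplectic form $\omega$ on $\Sigma$. Since $W_2$ deformation retracts onto $\Sigma$ and $\int_\Sigma\Omega\neq 0$, the class $[\Omega]\in H^2(W_2;\R)$ is nonzero, so this is a strong convex filling but emphatically not an exact one (the Liouville field in Lemma~\ref{lem:con} is only defined away from the zero section). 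The paper points this out explicitly in Remark~(1) of Section~\ref{section:decomp}. Since Theorem~\ref{thm:decomp} needs \emph{exact} fillings (that is what feeds into Theorem~\ref{thm:product}), producing an exact filling of $(M,\xi)$ with the topology of the disc bundle is the real content of the proof, and your proposal skips it.

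The paper resolves this as follows. By adjunction one may take the Donaldson degree $k$ large enough that $g(\Sigma)>1$. The contact structure $\xi$ induced on $M=\partial(\nu\Sigma)$ is horizontal (transverse to the circle fibres), hence has negative maximal twisting, and by the Giroux--Honda classification there are exactly two such tight structures on a circle bundle of negative Euler number over a surface of genus $g>1$, both universally tight, and both arise as contact $(-1)$-surgery on an all-positive or all-negative stabilisation of the Legendrian knot in Figure~\ref{figure:stein}. That surgery diagram \emph{is} a Stein handlebody, so it exhibits a Stein (hence exact) filling $W_2$ of the very same $(M,\xi)$ with the topology of the disc bundle of Euler number $e=-[\Sigma]^2$. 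This also dissolves your ``principal obstacle'': there is no separate $\xi_2$ to match with $\xi_1$, because the classification identifies $\xi$ directly with the boundary of the Stein handlebody. (Two smaller points: for $W_1$ the paper does not invoke a Stein structure on the Donaldson complement --- a nontrivial claim you assert without proof --- but simply uses the localisation principle to see that $\omega|_{W_1}$ is exact and then corrects the Liouville field in a collar; and comparing Euler classes of the plane fields, as you suggest, would not by itself distinguish the entries of the Giroux--Honda list, so your matching strategy would not close the argument even for strong fillings.)
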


Before turning to the proof of this theorem, we collect a few
observations. By (possibly) perturbing and rescaling the
symplectic form $\omega$ we can assume that $\omega /2\pi$
represents an integral
cohomology class in $H^2 (V; \R )$. According to a famous result of
Donaldson~\cite{dona96}, for $k\in\N$ sufficiently large the Poincar\'e
dual of the cohomology class $k[\omega /2\pi ]$ can be represented by a
$2$-dimensional connected symplectic submanifold $\Sigma \subset V$.

\begin{prop}
\label{prop:exact}
Suppose that the connected symplectic surface $\Sigma$ in the
symplectic $4$-manifold $(V, \omega )$
represents the Poincar\'e dual of $k[\omega /2\pi ]$ in the second homology
group $H_2(V)$. Then there is a closed tubular neighbourhood $\nu\Sigma$
of $\Sigma$ with $\omega$-concave boundary $\partial (\nu\Sigma )$.
Write $M$ for this boundary, oriented as the boundary of the closure $W_1$ of
the complement of~$\nu\Sigma$. With $\xi$ denoting the induced
contact structure on~$M$, the symplectic manifold $(W_1,\omega )$
is --- after a suitable modification of the symplectic form in
a neighbourhood of $M$ in~$W_1$ --- an exact filling of the contact
$3$-manifold $(M ,\xi )$.
\end{prop}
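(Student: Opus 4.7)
The plan is to split the proof into two stages: first, construct an $\omega$-concave tubular neighbourhood of $\Sigma$ using Weinstein's symplectic neighbourhood theorem together with the Boothby--Wang model of Lemma~\ref{lem:con}; second, use the Poincar\'e-duality hypothesis to conclude that $\omega|_{W_1}$ is exact and then invoke the Eliashberg modification from \cite{elia91} (recalled in the Remark after Theorem~\ref{thm:product}) to upgrade the resulting strong filling to an exact one.

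For the first stage, I would note that the symplectic normal bundle $\nu\Sigma\to\Sigma$ has Euler class $[\Sigma]|_{\Sigma}=k[\omega|_\Sigma/2\pi]$. Since $\omega|_\Sigma$ is an area form on the symplectic submanifold $\Sigma$, this makes $\nu\Sigma$ positive in the sense of Section~\ref{section:BW}. By Weinstein's symplectic neighbourhood theorem I can identify a neighbourhood of $\Sigma$ in $(V,\omega)$ symplectomorphically with a neighbourhood of the zero section in $\nu\Sigma$ equipped with an explicit model form; choosing a Hermitian connection $1$-form $\alpha$ on $\nu\Sigma$ with $d\alpha=-k\pi^{*}(\omega|_\Sigma)$, I would arrange the model so that $\omega=d\bigl((\tfrac{r^2}{2}-\tfrac{1}{k})\alpha\bigr)$ on the punctured disk bundle. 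The corresponding Liouville field
\[
X=\Bigl(\frac{r}{2}-\frac{1}{kr}\Bigr)\partial_r
\]
points inward toward $\Sigma$ whenever $r^2<2/k$, so fixing any small $r_0$ with $r_0^2<2/k$ gives the desired tubular neighbourhood $\nu\Sigma=\{r\le r_0\}$ with $\omega$-concave boundary $M$, carrying the Boothby--Wang contact structure $\xi$.

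For the second stage, the cohomological key point is that $\PD[\Sigma]\in H^{2}(V;\Z)$ admits a de Rham representative supported in an arbitrarily small tubular neighbourhood of $\Sigma$; choosing such a representative supported in $\Int(\nu\Sigma)$ shows $\PD[\Sigma]|_{W_1}=0$ in $H^{2}(W_1;\R)$. Since $k[\omega/2\pi]=\PD[\Sigma]$ and $k\neq 0$, this forces $[\omega]|_{W_1}=0$, so $\omega|_{W_1}$ is exact. Viewed from $W_1$, the same Liouville field $X$ extends across $M$ into a collar of $\partial W_1$ and points \emph{outward} from $W_1$ there, so $(W_1,\omega)$ is already a strong convex filling of $(M,\xi)$ whose symplectic form happens to be exact. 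Applying the Eliashberg construction from \cite{elia91} to modify $\omega$ in a collar of $M$ inside $W_1$ then produces a primitive $\lambda$ on $W_1$ whose pullback to $M$ is a contact form for $\xi$, giving the desired exact filling.

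The main obstacle I anticipate is the orientation and sign bookkeeping in the first stage: one must verify that the concave Liouville $X$ on the $\nu\Sigma$-side, viewed from the $W_1$-side (with its opposite boundary orientation), produces \emph{the same} cooriented contact structure $\xi$ on $M$, so that $(W_1,\omega)$ genuinely fills $(M,\xi)$ rather than its conjugate. The remaining step --- combining exactness of $\omega|_{W_1}$ with the Eliashberg modification --- is essentially a citation once everything is in place.
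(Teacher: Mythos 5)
Your proposal is correct and follows essentially the same route as the paper's own proof: the concave neighbourhood comes from Lemma~\ref{lem:con} together with the symplectic neighbourhood theorem, exactness of $\omega|_{W_1}$ comes from the localisation principle for the Poincar\'e dual of $[\Sigma]$, and the upgrade from a strong filling with exact symplectic form to an exact filling is the Eliashberg-type collar modification recalled in the Remark of Section~\ref{section:product}. The extra explicit model computation in your first stage is just an unpacking of Lemma~\ref{lem:con} and does not change the argument.
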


\begin{proof}
Since the self-intersection $[\Sigma ]^2=k^2 \int_V\omega \wedge
\omega /4\pi^2$ is positive, the stipulated $\omega$-concave neighbourhood
$\nu \Sigma$ exists by Lemma~\ref{lem:con} and the symplectic
neighbourhood theorem~\cite[Theorem~3.30]{mcsa98},
cf.~\cite[Corollary~6]{etny98}.

The class $k[\omega /2\pi ]$, being Poincar\'e dual to the homology class
represented by~$\Sigma$, may be represented by
a differential form with support in the interior of~$\nu\Sigma$
(this is known as the localisation principle,
see~\cite{botu82}). It follows that, under the inclusion $W_1\subset V$,
the class $[\omega /2\pi ]$ pulls back to the zero class in $H^2(W_1;\R )$.
This means that $\omega|_{W_1} =d\beta$ for some $1$-form $\beta$ on~$W_1$.

A priori, $(W_1,d\beta )$ is not an exact filling of $(M,\xi)$, because
the Liouville vector field $Y$ defined by $i_Y\omega =\beta$ need not
be transverse to~$M$. However, as pointed out in the remark at
the beginning of Section~\ref{section:product}, we can modify the
symplectic form in a collar neighbourhood of the boundary
so as to obtain an exact filling.
\end{proof}

For the proof of the main result of this section we need a more
precise understanding of the positive (and fillable,
hence tight) contact structure $\xi$ on the
$3$-manifold~$M$. Notice that with our orientation convention
for $M$ (see Proposition~\ref{prop:exact}), this manifold
is an $S^1$-bundle over $\Sigma$ with negative Euler number~$e=-[\Sigma ]^2$.

Write $g$ for the genus of~$\Sigma$.
The adjunction equality $2g-2=[\Sigma ]^2-\langle c_1(V),[\Sigma ]\rangle$,
cf.~\cite[Theorem~3.1.9]{ozst04},
shows that by choosing $k$ (in the above proposition) sufficiently large,
which Donaldson's theorem allows us to do, we may always assume that
$g>1$.

Recall that tight contact structures on $S^1$-bundles admit a
numerical invariant, called the {\em maximal twisting\/} $\ttt$, which measures
the maximal contact framing of a Legendrian knot smoothly isotopic to
the fibre, when measured with respect to the framing the fibre
inherits from the fibration. If the contact framing is non-negative
with respect to the fibration framing, traditionally one declares
$\ttt =0$. (For more on twisting see~\cite{hond00b}.)

According to the proof of Lemma~\ref{lem:con},
the contact structure $\xi $ on $M$ is
{\em horizontal}, that is, the contact
planes are transverse to the fibres of the $S^1$-fibration. This
property implies that $\xi$ has negative maximal twisting,
see the proof of~\cite[Theorem~3.8]{hond00b}. On the
other hand, according to the classification of tight contact
structures on a circle bundle with negative Euler number
over a surface of genus $g>1$, there are exactly two contact structures
(up to isotopy) which are horizontal~\cite[Theorem~2.11]{hond00b},
and both are universally tight~\cite[Lemma~3.9]{hond00b}.

\begin{figure}[h]
\centering
\includegraphics[scale=0.2]{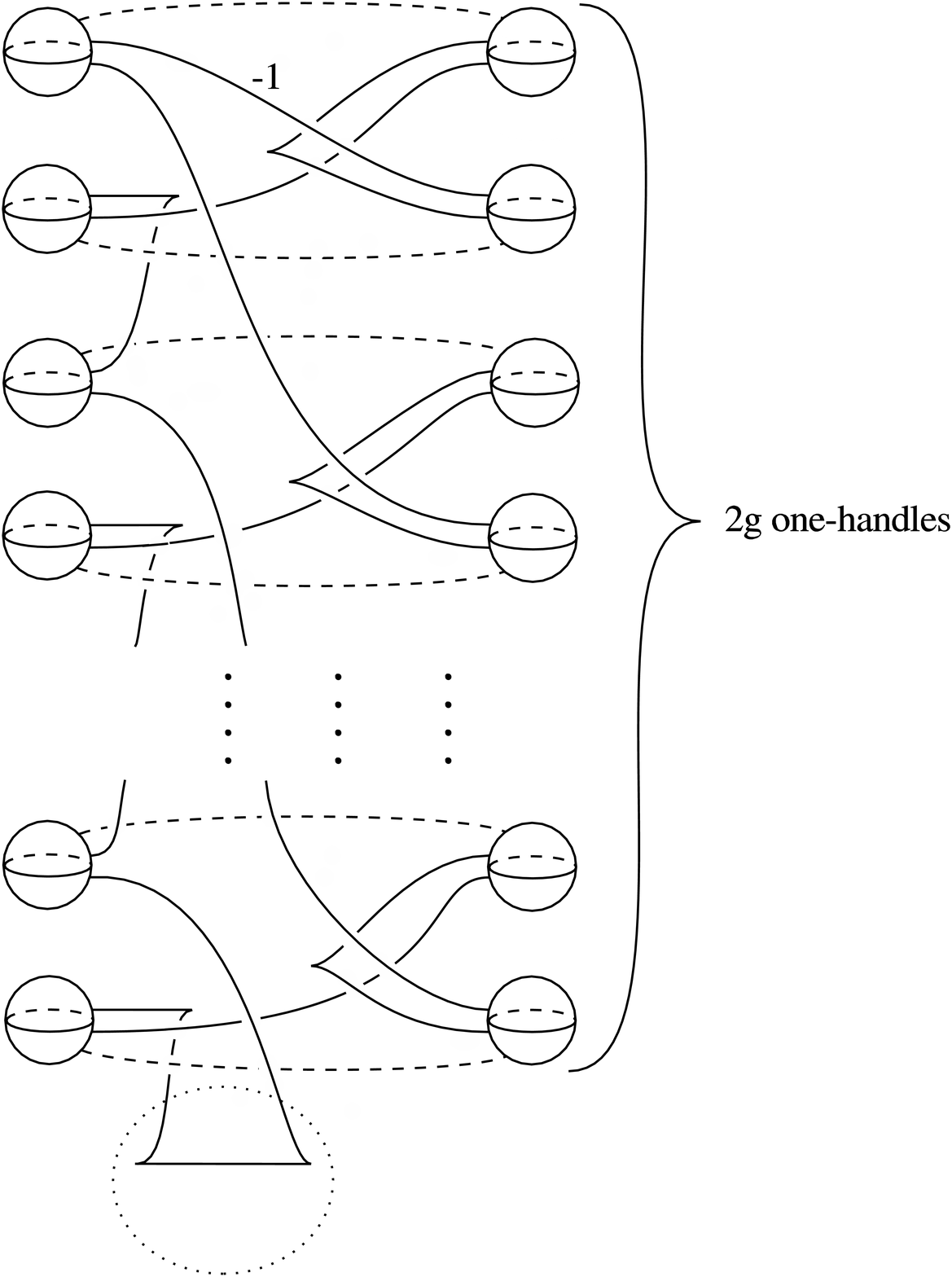}
  \caption{The surgery diagram for $(M,\xi )$.}
  \label{figure:stein}
\end{figure}

We claim that these two contact structures can be described by the
contact surgery diagram of Figure~\ref{figure:stein}
(which is taken from~\cite{list04}) after the surgery curve
has been stabilised in one direction $2g-2-e$ times, i.e.,
after the dotted ellipse in Figure~\ref{figure:stein} has been
replaced by the diagram in Figure~\ref{figure:stabilise},
which contains $2g-2-e$ additional zigzags on the left,
or by the mirror image of that diagram.
Indeed, the Legendrian knot $K$ shown in the diagram has Thurston--Bennequin
invariant $\tb (K)=2g-1$ (and rotation number
$\rot (K)=0$), so contact $(-1)$-surgery on it
produces the circle bundle over the surface of genus $g$ with
Euler number $2g-2$, cf.~\cite[Example~11.2.4]{gost99}.
Stabilising $K$ before the surgery has the effect of reducing the Euler
number. It turns out that each of the $2g-1-e$ tight contact structures
on $M$ with negative twisting can be obtained via one of the $2g-1-e$
different ways of performing $2g-2-e$
stabilisations~\cite[Section~3.2]{hond00b}. The result will be universally
tight, however, only in case the stabilisations are all of the same sign,
see~\cite[p.~435]{gost99}.

\begin{figure}[h]
\centering
\includegraphics[scale=0.25]{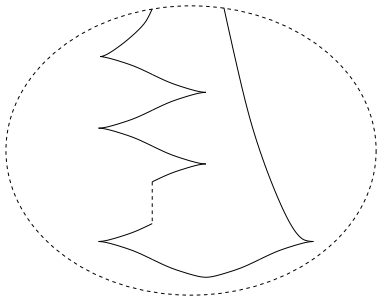}
  \caption{The stabilisations.}
  \label{figure:stabilise}
\end{figure}

Notice that, since contact $(-1)$-surgery corresponds to attaching
a Stein $2$-handle, Figure~\ref{figure:stein} (with the
appropriate stabilisations of~$K$) actually describes a Stein
filling $W_2$ of $(M,\xi )$ by the disc bundle over $\Sigma$ with Euler
number~$e$.

\begin{rems}
(1) Lemma~\ref{lem:con} also provides a strong symplectic filling of
$(M,\xi )$ with the topology of~$W_2$, but this contains the
closed surface $\Sigma$ (i.e.\ the zero section) as
a symplectic submanifold, so this filling is far from being exact.

(2) An alternative argument for the existence of a Stein structure on~$W_2$,
which does not rely on the classification of contact structures,
can be given by appealing to a result of Bogomolov and
de Oliveira~\cite{bool97}. Consider the holomorphic $\cpl$-bundle
over $\Sigma$ associated to the $S^1$-bundle of Euler number~$-e>0$.
This has a $0$- and an $\infty$-section. The complement of
a neighbourhood of the $0$-section gives a holomorphic filling
of $(M,\xi )$ with the desired topology of~$W_2$. The main result
from \cite{bool97} then tells us that a small deformation of
that holomorphic filling will be a Stein filling (or potentially the
blow-up of a Stein filling, but this can only occur for $g=0$
and~$e=-1$.)

(3) For symplectic manifolds $(V,\omega )$ of higher dimensions $2n>4$,
the manifold $W_2$ will be a $2$-disc bundle
over a $(2n-2)$-dimensional symplectic submanifold $\Sigma\subset V$,
so it has homology in dimension $2n-2>n$ and cannot possibly carry
a Stein structure. Note, however, that in order to extend our
result to higher dimensions it would be enough
to find an exact symplectic form (filling the boundary contact
structure). As first observed by McDuff~\cite{mcdu91}
and discussed further in~\cite[Section~7]{geig97a}, the homological
restrictions for Stein fillings do not apply to exact symplectic fillings.
\end{rems}

\begin{proof}[Proof of Theorem~\ref{thm:decomp}]
Simply take $W_1$ as in Proposition~\ref{prop:exact}, and
$W_2$ as in the preceding discussion.
\end{proof}
\section{Examples}
\label{section:examples}
Here are a couple of simple cases where we do not need to invoke Donaldson's
theorem for proving Theorem~\ref{thm:decomp}. In the second example we
also show how to obtain additional information about the homotopy
classification of the contact structures obtained
via Theorem~\ref{thm:product}.

\vspace{1mm}

(1) If the symplectic manifold $V$ we want to decompose
as $W_1\cup\oW_2$ is a ruled surface, that is, if it admits
an $S^2$-bundle structure over some surface~$\Sigma$, then we can take
$W_1=W_2$ to be any disc bundle over $\Sigma$ that admits
a Stein structure, with even (resp.\ odd) Euler number $e$ if $V$ is the
trivial (resp.\ non-trivial) $S^2$-bundle.
(The condition for the existence of a
Stein structure is that the sum of the Euler number $e$ and the Euler
characteristic of $\Sigma$ be non-positive;
see~\cite[Exercise~11.2.5]{gost99}.)

\vspace{1mm}

(2) Now let $V$ be the complex projective plane $\cpk$.
Our aim is to show that the decomposition $\cpk = W_1\cup\oW_2$
can be chosen in such a way that we can realise every homotopy class
of almost contact structures on $\cpk\times S^1$ by a contact
structure.

Almost contact structures on $N:=\cpk\times S^1$ are determined
by their first Chern class, since $H^2(N)$ has no $2$-torsion.
(In \cite[Proposition~8.1.1]{geig08}
this assumption on $2$-torsion was erroneously omitted; for a correct
proof of the preceding statement see~\cite{hami08}.) The map
$[z_0:z_1:z_2]\mapsto [\oz_0:\oz_1:\oz_2]$ defines an orientation-preserving
diffeomorphism of $\cpk$ that acts as minus the identity on
$H^2(\cpk )$. Therefore, since the second Stiefel--Whitney class
$w_2(N)$ equals $1\in\Z_2= H^2(N;\Z_2)$, it suffices
to show that for any odd $c\in\N$ we can realise one of
$\pm c\in\Z\cong H^2(N)$
as the first Chern class of some contact structure; in other words,
we need not worry about signs.

Choose a smooth complex projective curve $C_d\subset\cpk$
of degree $d\geq 2$. Define $W_1$ to be the complement of
an open tubular neighbourhood of~$C_d$, with symplectic form given by
the restriction of the standard K\"ahler form on $\cpk$ to~$W_1$
(possibly adjusted as in the proof of Proposition~\ref{prop:exact}).

The genus $g=g(d)$ of $C_d$ --- determined by the adjunction
equality --- equals $g=(d-1)(d-2)/2$. The manifold $W_2$ in our general
construction now is the normal disc bundle of $C_d$, but with
reversed orientation. So the Euler number of this bundle
is $e=-[C_d]^2=-d^2$. (The condition for this to admit
a Stein structure is $0\geq 2-2g+e=3d-2d^2$, which is why we have to
rule out the case $d=1$.)

\begin{prop}
\label{prop:cp2}
The described decomposition of $\cpk$ (for any $d\geq 2)$ leads to a
contact structure on $\cpk\times S^1$ with first Chern class $\pm (2d-3)$.
This means that every homotopy class of almost contact structures on
$\cpk\times S^1$ can be realised by a contact structure.
\end{prop}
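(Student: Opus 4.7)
The plan is to compute the integer $c_1(\xi) \in H^2(\cpk \times S^1) \cong \Z\langle h\rangle$ (with $h$ the pullback of the hyperplane class) by restricting $\xi$ to the piece $\oW_2 \times S^1$, where cohomology is torsion-free and the computation is therefore unambiguous. On that piece the contact form produced by Theorem~\ref{thm:product} is $\alpha = \lambda_2 - d\theta$, so at each point $(p,\theta)$ the contact distribution consists of vectors $(X, \lambda_2(X)\,\partial_\theta) \in T_p\oW_2 \oplus \R\partial_\theta$. Projection to the first summand is an isomorphism of real vector bundles under which $d\alpha = d\lambda_2$ corresponds to the symplectic form $d\lambda_2$ on $W_2$. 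Consequently $\xi|_{\oW_2 \times S^1}$ is isomorphic, as a complex bundle, to the pullback of $(TW_2, J_2)$, where $J_2$ is the Stein complex structure on $W_2$ (any $J$ compatible with $d\lambda_2$ yields the same first Chern class).

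With this identification in place, $c_1(\xi)|_{\oW_2 \times S^1}$ equals the pullback of $c_1(TW_2, J_2)$. Since $W_2$ is the total space of a holomorphic disc bundle $L \to \Sigma := C_d$ with $c_1(L) = -d^2$ and $g(\Sigma) = (d-1)(d-2)/2$, the splitting $TW_2 = \pi^*T\Sigma \oplus \pi^*L$ gives
\[ c_1(TW_2) = (2-2g-d^2)\, \pi^*[\mathrm{pt}]^* = -d(2d-3)\, \pi^*[\mathrm{pt}]^* \]
in $H^2(W_2) \cong H^2(\Sigma) = \Z$. Because $[C_d] = d[\cpl]$ in $H_2(\cpk)$, the pairing $\langle h|_{\oW_2}, [C_d]\rangle = d$ forces $h|_{\oW_2} = d \cdot \pi^*[\mathrm{pt}]^*$; hence $c_1(\xi)|_{\oW_2 \times S^1} = -(2d-3)\cdot h|_{\oW_2 \times S^1}$. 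As the restriction map $H^2(\cpk \times S^1) \to H^2(\oW_2 \times S^1)$ is injective (the target $\Z \oplus \Z^{2g}$ is torsion-free and $h|_{\oW_2}$ has infinite order), this determines $c_1(\xi) = -(2d-3)\,h$ globally.

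For the $\pm$ sign, the orientation-preserving diffeomorphism $\sigma \times \mathrm{id}$ of $\cpk \times S^1$ induced by complex conjugation on $\cpk$ acts as $-1$ on $H^2$, so the pulled-back contact structure has $c_1 = +(2d-3)\,h$. As $d$ runs through $\{2,3,4,\ldots\}$, the values $2d-3$ cover all positive odd integers, so $\pm(2d-3)$ realises every odd class in $H^2(\cpk \times S^1) \cong \Z$. Combined with the earlier remark that any realisable $c_1$ must reduce to $w_2 \neq 0 \pmod 2$ and hence be odd, this shows that every homotopy class of almost contact structure on $\cpk \times S^1$ is realised by a contact structure. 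The delicate step is the identification in the first paragraph: one must correctly recognise $\xi|_{\oW_2 \times S^1}$ as the complex bundle $(TW_2, J_2)$ in spite of the fact that $\oW_2$ carries the orientation opposite to the one induced by $J_2$, and one must keep track of the normalisation $h|_{\oW_2} = d \cdot \pi^*[\mathrm{pt}]^*$; the remaining computations are bookkeeping.
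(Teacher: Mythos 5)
Your reduction of the problem to computing $c_1(TW_2,J_2)$ --- the identification of $\xi|_{\oW_2\times S^1}$ with the pullback of $(TW_2,d\lambda_2)$, the normalisation $h|_{W_2}=d\cdot\pi^*[\mathrm{pt}]^*$, the injectivity of restriction to $H^2(\oW_2\times S^1)$, and the conjugation trick for the sign --- all matches the paper. The gap lies in the one step you yourself flag as delicate, and it is more serious than an orientation subtlety. You compute $c_1(TW_2,J_2)$ from the splitting $TW_2=\pi^*T\Sigma\oplus\pi^*L$, i.e.\ you take $J_2$ to be the holomorphic disc-bundle structure. But the holomorphic disc bundle is \emph{not} an exact symplectic manifold: its zero section is a closed holomorphic, hence symplectic, curve, so no compatible symplectic form on it is exact (this is Remark~(1) after Theorem~\ref{thm:decomp}). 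The form $d\lambda_2$ entering the construction is therefore a different structure, and the homotopy class of a compatible $J_2$ --- which, over the $2$-complex $\Sigma\simeq W_2$, is precisely the datum $c_1$ --- is \emph{not} determined by the diffeomorphism type of $W_2$: the disc bundle of Euler number $e$ over a genus-$g$ surface carries Stein structures realising every value $c_1=\rho$ with $\rho\equiv e\pmod 2$ and $|\rho|\le 2g-2-e$, namely the rotation numbers of the various stabilisations of the attaching circle. So asserting $c_1(TW_2,J_2)=(2-2g)+e$ is asserting the answer, not deriving it; "any $J$ compatible with $d\lambda_2$ yields the same first Chern class" is true, but which class that is depends on which exact symplectic structure $d\lambda_2$ is.

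What is missing is the identification of \emph{which} exact filling $W_2$ is, equivalently of which contact structure $\xi$ the concave boundary of $\nu C_d$ carries. The paper shows $\xi$ is horizontal and universally tight, hence (for $g>1$, by Honda's classification) one of the two structures obtained by all-positive or all-negative stabilisations; the corresponding Stein filling has $c_1=\pm(2g-2-e)=\pm(2d^2-3d)$ by the rotation-number formula, which agrees with your value up to the sign you discard anyway. But this matching needs separate arguments for $d=3$ ($g=1$, via Giroux torsion and Gay's theorem) and $d=2$ ($g=0$, $M=L(4,1)$, via a covering-space argument and the classification of tight structures on lens spaces); your proof does not distinguish these cases and would not detect if, say, the structure induced on $L(4,1)$ were the virtually overtwisted one, whose Stein disc-bundle filling has $c_1=0$ and would give $c_1(\eta)=0$, which is not even an admissible almost contact class here. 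Your route \emph{can} be completed: Remark~(2) after Theorem~\ref{thm:decomp} notes that by Bogomolov--de~Oliveira a small deformation of the pseudoconvex disc bundle is Stein, and a small deformation does not change the homotopy class of $J$, so that particular Stein structure does satisfy $c_1=(2-2g)+e$; but one must still verify that this filling fills the $\xi$ coming from the concave side, which is the bulk of the paper's proof.
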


\begin{proof}
We begin with the case $d\geq 4$. Here $g>1$, so we can use the Stein
surface $W_2$ described in Figure~\ref{figure:stein} of
Section~\ref{section:decomp}. That is, we
attach a $2$-handle along the $(2g-2-e)$-fold positive or negative
stabilisation $S^{2g-2-e}_{\pm}K$ of the Legendrian knot $K$ shown in that
figure. In the present situation we have
\[ 2g-2-e= (d-1)(d-2)-2+d^2=2d^2-3d.\]
Recall from \cite[Section~11.3]{gost99} that the generator $[C_d]$ of
$H_2(W_2)$ corresponds to the $2$-handle
attached along $S^{2d^2-3d}_{\pm}K$ (with framing $-1$ relative to the
contact framing). Moreover, the first Chern class $c_1(W_2)$
evaluates on that generator as the rotation number
$\rot (S^{2d^2-3d}_{\pm}K)=\pm 2d^2-3d$ of the attaching circle.

The second homology of $\cpk$ (and of $\cpk\times S^1$) is generated by
the class $[\cpl ]=[C_d]/d$. Write $\eta=\ker\alpha$ for the contact
structure on $\cpk\times S^1$ obtained via the construction in the
proof of Theorem~\ref{thm:product}. Since $d\alpha =d\lambda_2$
over $\Int (\oW_2)\times S^1$, we can now compute (possibly up to signs)
\begin{eqnarray*}
\langle c_1(\eta ),[\cpl ]\rangle & = & \langle c_1(\eta ),[C_d]\rangle /d
      \;\; = \;\; \langle c_1(W_2),[C_d]\rangle /d \\
      & = & \rot (S^{2d^2-3d}_{\pm}K)/d
      \;\; = \;\; \pm (2d-3).
\end{eqnarray*}

In the case $d=3$ we have $g=1$ and $e=-9$. Lemma~3.9 of~\cite{hond00b}
still applies to show that the horizontal contact structure $\xi$
we want to realise on $M=\partial W_2$ is universally tight.
Since $(M,\xi )$ is symplectically filled by~$W_1$, it follows from
a result of Gay~\cite[Corollary~3]{gay06} that its torsion
in the sense of Giroux~\cite{giro00} is zero. This characterises
$\xi$ up to diffeomorphism, see~\cite[p.~686]{giro00}. Therefore,
we may again use the Stein manifold $W_2$ from Figure~\ref{figure:stein}
(with $g=1$), and the calculation of the first Chern class
of $\eta$ goes through as in the case~$d\geq 4$.

Finally, for $d=2$ we have $g=0$ and $e=-4$. So here the separating
hypersurface $M$ is the lens space $L(4,1)$ with a contact structure $\xi$
that admits a symplectic (in fact, Stein) filling by~$W_1$, whose
interior is diffeomorphic to the complement of a quadric in~$\cpk$;
the manifold $W_2$ is the disc bundle over $S^2$ with Euler class~$-4$.
From the Mayer--Vietoris sequence of the splitting $\cpk =W_1\cup_M\oW_2$
one finds $H_1(W_1)=\Z_2$. Moreover, by the Seifert--van~Kampen theorem,
the inclusion $L(4,1)\subset W_1$ must induce a surjective
homomorphism on fundamental groups. It follows that $\pi_1(W_1)=\Z_2$.

When we pass to the double cover of $W_1$, we obtain a symplectic filling
of the double cover $(\rpk ,\tilde{\xi})$ of $(L(4,1),\xi )$. Since
there is a unique tight contact structure on $\rpk$, {\em viz.}\ the one
covered by the standard contact structure on~$S^3$, we conclude that
$\xi$ is a universally tight contact structure on $L(4,1)$.

From the classification of tight contact structures on $L(4,1)$,
see~\cite{giro00} and \cite{hond00a}, we know that
there are two (resp.\ one) such structures up to isotopy
(resp.\ diffeomorphism), given by the surgery diagram in
Figure~\ref{figure:lens} and its mirror image. (Here we appeal again
to \cite[p.~435]{gost99} in order to see that only the
stabilisations $S^2_{\pm}K_0$ of the Legendrian unknot $K_0$ give
rise to a universally tight contact structure on $L(4,1)$,
whereas contact $(-1)$-surgery on $S_+S_-K_0$ gives the unique tight but
virtually overtwisted contact structure.)

\begin{figure}[h]
\labellist
\small\hair 2pt
\pinlabel $-1$ at 213 174
\endlabellist
\centering
\includegraphics[scale=0.4]{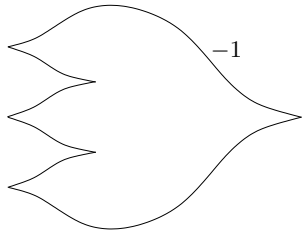}
  \caption{The universally tight contact structure on $L(4,1)$.}
  \label{figure:lens}
\end{figure}

Read as a Kirby diagram, that figure describes the desired
Stein filling $W_2$ of $(L(4,1),\xi )$ by the disc bundle
over $S^2$ with Euler class~$-4$. Finally, we have
$\rot (S^2_{\pm}K_0)/2=\pm 1$, as desired.
\end{proof}
\section{The $S^1$-connected sum}
\label{section:sum}
Let $\iota_i\co S^1\times D^4\rightarrow M_i$ ($i=1,2$) be two 
orientation-preserving embeddings
into oriented $5$-dimensional manifolds. Choose an orientation-reversing
diffeomorphism $\phi$ of $S^3=\partial D^4$. Then the
{\em $S^1$-connected sum\/} of the two manifolds $M_1,M_2$ is defined as
\[ M_1\#_{S^1}M_2:=\bigl( M_1\setminus\iota_1(S^1\times\Int (D^4)\bigr)
\cup_{\partial}
\bigl( M_2\setminus\iota_2(S^1\times\Int (D^4)\bigr) ,\]
where the boundaries are glued by
\[ \iota_1(\theta ,p)\sim\iota_2(\theta ,\phi (p))\;\;
\mbox{\rm for}\;\; \theta\in S^1,\; p\in S^3.\]

\begin{rem}
Given an embedding of $S^1\equiv S^1\times\{ 0\}$ into an oriented
$5$-manifold, there are two possible extensions (up to isotopy)
to an embedding of $S^1\times D^4$, since normal framings are classified by
$\pi_1(\SO (4))=\Z_2$. So the notation $\#_{S^1}$ is slightly ambiguous.
In many cases discussed in~\cite{hasu09} the diffeomorphism type
of the resulting manifold does not actually depend on the choice of
framing of the embedded circles. On the other hand, by Cerf's theorem there
is a unique orientation-reversing diffeomorphism of $S^3$ up to
isotopy, so the particular choice of $\phi$ is irrelevant. The construction
in the following theorem goes through in all (odd) dimensions,
but only with the specific choice of gluing described in the proof.
\end{rem}

\begin{thm}
\label{thm:sum}
Let $(M_1,\xi_1)$ and $(M_2,\xi_2)$ be two $5$-dimensional contact
manifolds. Given two orientation-preserving embeddings
$S^1\times D^4\rightarrow M_i$ ($i=1,2$), the corresponding
$S^1$-connected sum $M_1\#_{S^1}M_2$ carries a contact structure.
The embeddings of $S^1\times D^4$ can be isotoped in such a way
that the contact structure on $M_1\#_{S^1}M_2$ coincides
with $\xi_i$ on $M_i\setminus S^1\times D^4$.
\end{thm}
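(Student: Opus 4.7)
The plan is to reduce the theorem to a purely local construction via the isotropic neighbourhood theorem, and then to build an explicit ``contact neck'' by an interpolation argument analogous to the one in the proof of Theorem~\ref{thm:product}.

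First, I would apply a $C^0$-small perturbation to each embedding $\iota_i$ so that the core circle $K_i:=\iota_i(S^1\times\{0\})$ is isotropic in $(M_i,\xi_i)$; this is possible since the contact hyperplane distribution has rank $4$, so a $1$-dimensional submanifold may generically be made tangent to it. The conformal symplectic normal bundle of $K_i$ is a rank-$2$ symplectic bundle over $S^1$ and hence trivial, so the standard neighbourhood theorem for isotropic submanifolds (cf.\ \cite[Chapter~6]{geig08}) yields a contactomorphism from a tubular neighbourhood of $K_i$ onto the fixed model
\[ \bigl(S^1\times D^4_{\epsilon},\;\ker(dz+p\,d\varphi+x\,dy)\bigr). \]
After a further isotopy I can arrange that $\iota_i$ corresponds to the standard inclusion $S^1\times D^4\hookrightarrow S^1\times D^4_{\epsilon}$, the residual framing ambiguity being precisely the $\Z_2$ from $\pi_1(\SO(4))$ noted in the preceding remark.

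The problem has now become purely local: I must put a contact structure on
\[ \mathcal{N}:=\bigl(S^1\times(D^4_{\epsilon}\setminus\Int D^4)\bigr)\cup_{\mathrm{id}\times\phi}\bigl(S^1\times(D^4_{\epsilon}\setminus\Int D^4)\bigr), \]
obtained by gluing two copies of the punctured standard model along their inner spheres via $\mathrm{id}\times\phi$, such that it coincides with $\ker(dz+p\,d\varphi+x\,dy)$ on collars of the two outer $S^1\times S^3$-boundaries. Once such an $\mathcal{N}$ is in hand, inserting it in place of the two cut-out neighbourhoods produces a contact structure on $M_1\#_{S^1}M_2$ that agrees with $\xi_i$ on $M_i\setminus\iota_i(S^1\times D^4)$, settling both claims of the theorem. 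By Cerf's theorem (already invoked in the preceding remark) the choice of orientation-reversing $\phi$ is immaterial up to isotopy, so I am free to select one convenient for the calculation.

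The main obstacle will be the explicit construction of $\alpha_{\mathcal{N}}$. My approach is to parametrise $\mathcal{N}$ by a radial coordinate $t\in[-1,1]$ transverse to the $S^1\times S^3$-slices, with $t=0$ at the gluing locus and $t=\pm 1$ at the outer boundaries, and to seek $\alpha_{\mathcal{N}}$ in the form $f(t)\,\beta_1+g(t)\,\beta_2$, where $\beta_1,\beta_2$ are fixed $1$-forms pulled back from $S^1\times S^3$ and constructed from the restriction of the standard model form. The contact condition $\alpha_{\mathcal{N}}\wedge(d\alpha_{\mathcal{N}})^2\neq 0$ will then reduce to a single differential inequality on $(f,g)$ of the same flavour as $f'g-fg'>0$ from Section~\ref{section:product}, together with matching conditions at $t=\pm 1$. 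The two delicate points will be (i) choosing $\beta_1,\beta_2$ and $(f,g)$ so that the Pfaffian inequality admits a solution realising the prescribed boundary behaviour at $t=\pm 1$, and (ii) ensuring smoothness of $\alpha_{\mathcal{N}}$ across the gluing at $t=0$ despite the orientation-reversing character of $\phi$. Once this local model is established, the remainder is a routine pasting argument yielding the desired global contact form on $M_1\#_{S^1}M_2$.
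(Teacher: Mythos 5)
Your reduction to a local problem---making the core circles isotropic, invoking the isotropic neighbourhood theorem, and matching the framing---follows the same opening moves as the paper (which uses the $h$-principle for isotropic immersions and the surjectivity of $\pi_1(\U(1))\to\pi_1(\SO(4))$ to justify these points more carefully than a genericity appeal). But the argument stops exactly where the real work begins: the construction of the neck form $\alpha_{\mathcal{N}}=f(t)\,\beta_1+g(t)\,\beta_2$ is announced rather than carried out, and the two ``delicate points'' you flag are precisely the content of the theorem. Moreover, the analogy with Theorem~\ref{thm:product} is misleading. There the interpolation $f\beta+g\,d\theta$ works because the separating hypersurface is $M\times S^1$ with $\beta$ a genuine contact form on the odd-dimensional factor $M$, so the contact condition collapses to $f'g-fg'>0$ against the fixed volume form $dt\wedge\beta\wedge(d\beta)^{n-1}\wedge d\theta$. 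Here the hypersurface $\Sigma=S^1\times S^3$ is not a contact submanifold but a \emph{convex} hypersurface: in the model the invariant contact form is $dv_1+v_1\,dt+2v_2\,d\theta+v_3\,dw_3-w_3\,dv_3$, whose $dt$-coefficient vanishes along the dividing set $\{v_1=0\}\subset\Sigma$. An ansatz $f(t)\,\beta_1+g(t)\,\beta_2$ with $\beta_i$ pulled back from $\Sigma$ does not reduce to a single Pfaffian inequality of the stated flavour, and you offer no candidate $\beta_1,\beta_2$ for which the boundary conditions at $t=\pm 1$ (two copies of the model with opposite normal orientations, glued by an orientation-reversing map of $S^3$) can be met.

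The paper sidesteps the neck construction altogether. It exhibits a contact vector field transverse to $\Sigma$, writes the germ of $\xi$ near $\Sigma$ in $\R$-invariant form on $\Sigma\times\R$, and then uses a one-parameter family of contact forms $\alpha_{\varphi}$ (a rotation mixing the $t$- and $\theta$-directions on the universal cover) together with Gray stability to conjugate the reflection $(t,v_2)\mapsto(-t,-v_2)$ into a contactomorphism of a neighbourhood of $\Sigma$ that preserves (an isotopic copy of) $\Sigma$ while reversing its normal orientation; the two punctured manifolds are then glued directly by this contactomorphism, with no interpolating region. Note also that, as the remark preceding the theorem stresses, the construction goes through only for the \emph{specific} gluing produced by this contactomorphism, so ``selecting a convenient $\phi$'' is not a choice to be postponed but part of what must be produced. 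Unless you can actually solve your boundary-value problem for $(f,g,\beta_1,\beta_2)$---and the convexity, rather than contactness, of $\Sigma$ is a structural obstruction to doing so naively---the proposal has a genuine gap at its central step.
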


\begin{proof}
For the time being, we drop the subscript $i$ and consider only
a single embedding as in the theorem. The restriction of $\xi$ to
the embedded $S^1\equiv S^1\times\{ 0\}\subset M$ is a trivial
$\U (2)$-bundle. Therefore, by the $h$-principle for
isotropic immersions~\cite[Chapter~16]{elmi02}, we may assume that
$S^1$ is tangent to $\xi$.

The conformal symplectic normal bundle (see~\cite{geig08}) of
an isotropic $S^1$ in a $5$-dimensional contact manifold is a
trivial $\U (1)$-bundle. Since the natural homomorphism $\pi_1 (\U (1))
\rightarrow \pi_1 (\SO (4))$ is surjective, the trivialisation of
this conformal symplectic normal bundle can be chosen in such a way
that the corresponding framing of $S^1$ coincides with the one
given by the embedding of $S^1\times D^4$ into~$M$.

The neighbourhood theorem for isotropic
embeddings~\cite[Theorem~2.5.8]{geig08} then implies that in a
neighbourhood of $S^1$ the contact structure $\xi$ can be given by
the contact form
\[ \alpha:= x_1\, dy_1-y_1\, dx_1 + x_2\, dy_2-y_2\, dx_2 +
x_3\, dy_3-y_3\, dx_3,\]
where the normal framing of $S^1$ given by $\partial_{y_1},\partial_{y_2},
\partial_{x_3},\partial_{y_3}$ corresponds to that of the
given embedding. Here $S^1$ is being identified with
\[ S^1=\{ x_1^2+x_2^2=1,\; y_1=y_2=x_3=y_3=0\} ;\]
the conformal symplectic normal bundle is the trivial bundle
spanned by $\partial_{x_3},\partial_{y_3}$.

The vector field
\[ X:= y_1\partial_{y_1}+y_2\partial_{y_2}+\frac{1}{2}x_3\partial_{x_3}+
\frac{1}{2}y_3\partial_{y_3}\]
is a contact vector field transverse to the boundary $\Sigma :=S^1\times S^3$
of a small tubular neighbourhood of $S^1\subset M$, i.e.\
the flow of $X$ preserves~$\xi$. (So $\Sigma$ is a convex hypersurface
in the sense of Giroux~\cite{giro91}.) Our goal will be to
use this information in order to define a contactomorphism
of a neighbourhood of $\Sigma$ that sends $\Sigma$ to itself but reverses
the normal direction. Such a contactomorphism can then be used to
effect the desired gluing in the theorem.

Use the flow of $X$ to identify a neighbourhood of $\Sigma$ in $M$
with a neighbourhood of $\Sigma\equiv\Sigma\times\{ 0\}$ in
$\Sigma\times\R$; the vector field $X$ is then identified with~$\partial_t$,
where $t$ denotes the $\R$-coordinate. Write
\[ (x_1,x_2)=(\cos\theta ,\sin\theta )\]
and
\[ (y_1,y_2,x_3,y_3)=(\e^tu_1,\e^tu_2,\e^{t/2}v_3,\e^{t/2}w_3).\]
Thus, $\theta$ is the coordinate on the $S^1$-factor, and
$(u_1,u_2,v_3,w_3)$ may be interpreted as coordinates
on the $S^3$-factor of~$\Sigma$.

In these coordinates, the $X$-invariant contact form
$\alpha_0:=\e^{-t}\alpha$ is given by
\begin{eqnarray*}
\alpha_0  & =  & (u_1\cos\theta+u_2\sin\theta )\, dt
                  + (u_1\sin\theta -u_2\cos\theta )\, d\theta\\
          &    & \mbox{}+\cos\theta\, du_1+\sin\theta\, du_2
                 +v_3\, dw_3-w_3\, dv_3.
\end{eqnarray*}
Replace the coordinates $(u_1,u_2)$ by
\[ (v_1,v_2):=(u_1\cos\theta +u_2\sin\theta ,u_1\sin\theta -u_2\cos\theta ).\]
Then $\alpha_0$ is written as
\[ \alpha_0=dv_1+v_1\, dt +2v_2\, d\theta +v_3\, dw_3-w_3\, dv_3.\]
As explained in \cite[p.~180]{geig08}, we may in fact identify a
neighbourhood of $\Sigma$ in $M$ contactomorphically
with {\em all of\/} $\Sigma\times\R$ with this $\R$-invariant
contact form.

Now consider the following family of $\R$-invariant $1$-forms on $\R
\times\Sigma$:
\begin{eqnarray*}
\alpha_{\varphi} & := & dv_1+ (v_1\cos\varphi -v_2\sin\varphi )\, dt\\
                 &    & \mbox{}+2(v_1\sin\varphi+v_2\cos\varphi )\, d\theta
                        +v_3\, dw_3-w_3\, dv_3,\;\;\varphi\in [0,\pi /2].
\end{eqnarray*}
This is a contact form for each $\varphi\in [0,\pi /2]$ since,
on the universal cover $\R\times\R\times S^3$ of $\R\times\Sigma$,
this is simply the pull-back of $\alpha_0$ under the coordinate
transformation
\[ (t,\theta )\longmapsto (t\cos\varphi+2\theta\sin\varphi ,
                          -\frac{t}{2}\sin\varphi +\theta\cos\varphi ).\]
Therefore, Gray stability (which applies on the
non-compact manifold $\R\times\Sigma$ because of the $\R$-invariance of
our contact forms) gives us an isotopy whose time-$(\pi /2)$
map $\phi$ pulls back $\alpha_0$ to
\[ \alpha_{\pi/2}=dv_1-v_2\, dt+2v_1\, d\theta+v_3\, dw_3
-w_3\, dv_3,\]
up to multiplication by some positive function.

The map
\[ \psi\co (t,\theta ,v_1,v_2,v_3,w_3)\longmapsto
(-t,\theta ,v_1,-v_2,v_3,w_3)\]
is a contactomorphism for $\ker\alpha_{\pi /2}$ that sends $\Sigma$
to itself, reversing both the orientation of $\Sigma$
and its normal orientation. So $\phi\circ\psi\circ\phi^{-1}$
is a contactomorphism for $\xi =\ker\alpha_0$ that preserves
the isotopic copy $\phi (\Sigma )$ of $\Sigma$ while reversing its normal
orientation. As explained, such a contactomorphism allows us to
perform the $S^1$-connected sum of two contact manifolds.
\end{proof}
\section{Five-manifolds with fundamental group of order two}
\label{section:Z2}
In \cite{geth98} it was shown that every closed, orientable
$5$-manifold $M$ with fundamental group $\Z_2$ and
second Stiefel--Whitney class $w_2(M)$
equal to zero on homology admits a contact structure.
The basis of this result was a structure theorem, also proved
in~\cite{geth98}, according to which any such manifold can be obtained
from one of ten explicit model manifolds by surgery along $2$-spheres.

An orientable $5$-manifold is said to be of {\em fibred type}
if the second homotopy group $\pi_2(M)$ is a trivial $\Z [\pi_1(M)]$-module.
In \cite{hasu09} Hambleton and Su give an explicit description
of the fibred type $5$-manifolds with fundamental group $\Z_2$
and torsion-free second homology. Not all the manifolds
discussed in \cite{geth98} are of fibred type: one of the ten model manifolds
fails to be so, and surgery along $2$-spheres will also destroy
that property, in general. On the other hand, the list
in \cite{hasu09} contains manifolds where $w_2(M)$ does not vanish
on homology.

As Hambleton and Su point out, all the manifolds in their list
have vanishing third integral Stiefel--Whitney class and thus admit
an almost contact structure.

\begin{prop}
\label{prop:five}
Every closed, orientable, fibred type $5$-manifold with
fundamental group $\Z_2$ and torsion-free second homology
admits a contact structure.
\end{prop}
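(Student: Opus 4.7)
The plan is to combine the explicit structure theorem of Hambleton--Su~\cite{hasu09} with the $S^1$-connected sum construction of Theorem~\ref{thm:sum}. Hambleton--Su describe any $M$ satisfying the hypotheses as an iterated $S^1$-connected sum of a short explicit list of model manifolds, so the proof reduces to contactifying each model and then invoking Theorem~\ref{thm:sum} to glue the contact structures along isotropic circles. This is a direct analogue of the strategy of \cite{geth98}, with the $S^1$-connected sum playing the role of surgery along $2$-spheres.

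First I would enumerate the Hambleton--Su building blocks. The simply connected summands that appear are the total spaces of $S^3$-bundles over $S^2$, namely $S^2\times S^3$ and the non-trivial bundle $S^2\,\tilde{\times}\,S^3$. The first is a Cartesian product and is contact by Corollary~\ref{cor:five}, while the second admits a contact structure by standard means, for instance as a Boothby--Wang bundle over a suitably chosen symplectic $4$-manifold, or by contact surgery on $S^5$. The models that contribute the $\Z_2$ in $\pi_1$ form a small explicit collection. Whenever such a model is itself a Cartesian product of lower-dimensional manifolds, Corollary~\ref{cor:five} applies directly; in the remaining cases I would construct the contact structure either by realising the model as the quotient of a simply connected $5$-manifold that admits a $\Z_2$-equivariant contact structure (typically by a symmetric version of the construction in Theorem~\ref{thm:product}), or by exhibiting the model as an $S^1$-connected sum of even simpler contact pieces.

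With contact structures in place on each building block, Theorem~\ref{thm:sum} glues them together to yield a contact structure on a manifold diffeomorphic to~$M$. The main technical obstacle I anticipate is matching the normal framings of the embedded circles: Theorem~\ref{thm:sum} produces circles that are isotropic for the chosen contact structure, with the normal framing coming from the conformal symplectic normal bundle, whereas the Hambleton--Su decomposition prescribes framings from its own combinatorial description. However, the remark following Theorem~\ref{thm:sum} points out that in many of the cases appearing in~\cite{hasu09} the diffeomorphism type of the result does not depend on the framing; the remaining cases have to be addressed individually, checking that the isotropic framing can be arranged to match the prescribed one, possibly after a preliminary isotopy of the embedding $S^1\times D^4\rightarrow M_i$. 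I expect the bulk of the work to lie in this bookkeeping step rather than in the construction of contact structures on the individual models.
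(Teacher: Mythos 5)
Your proposal follows essentially the same route as the paper: invoke the Hambleton--Su structure theorem to write $M$ as an iterated $S^1$-connected sum of explicit models, put a contact structure on each model (Cartesian products via Corollary~\ref{cor:five}, the remaining pieces via unit cotangent bundles and the earlier results of \cite{geth98}), and glue using Theorem~\ref{thm:sum}. The framing issue you flag is already resolved inside the proof of Theorem~\ref{thm:sum}, where the surjectivity of $\pi_1(\U(1))\rightarrow\pi_1(\SO(4))$ lets one realise either normal framing by an isotropic circle, so no extra bookkeeping is needed.
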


\begin{proof}
According to \cite[Theorem~3.6]{hasu09}, every such manifold
is an $S^1$-connected sum of some of the following manifolds:
\begin{itemize}
\item[(i)] one of the nine model manifolds from \cite{geth98}
with $w_2\neq 0$,
\item[(ii)] $S^2\times\rpk$,
\item[(iii)] $(\#_k S^2\times S^2)\times S^1$,
\item[(iv)] $\cpk\times S^1$.
\end{itemize}

By Theorem~\ref{thm:sum} it suffices to show that these individual
manifolds carry a contact structure. (There is no orientation issue,
because all the manifolds in this list admit an orientation-reversing
diffeomorphism.) The manifolds in (i) are covered by~\cite{geth98}.
The manifold in (ii) is the unit cotangent bundle of~$\rpk$.
The manifolds in (iii) and (iv) are covered by Corollary~\ref{cor:five}.
(For (iii) one may alternatively think of $S^2\times\ S^2\times S^1$
as the unit cotangent bundle of $S^2\times S^1$ and then take
$S^1$-connected sums.) This completes the proof.
\end{proof}

\begin{ack}
This project was initiated while H.~G.\ was a guest of the
Forschungsinstitut f\"ur Mathematik (FIM) of the ETH Z\"urich.
H.~G.\ thanks Dietmar Salamon for his hospitality, and the FIM for
its support. Most of the research was done during a stay of A.~S.\
at the Mathematical Institute of the Universit\"at zu K\"oln,
supported by the DFG Graduiertenkolleg ``Globale Strukturen in
Geometrie und Analysis''. A.~S.\ thanks the Mathematical Institute
for providing a stimulating working environment. We thank Su Yang for useful
correspondence, and Selman Akbulut for drawing our attention to the
work of Baykur. Last, but not least, we are grateful
to Paolo Lisca for allowing us to use his Figure~\ref{figure:stein}.
\end{ack}

\end{document}